\tikzstyle{w_vertex}=[circle,fill=black!100,text=white,inner sep=0.4mm,draw]
\tikzstyle{vertex}=[circle,fill=black!100,text=white,inner sep=0.8mm]
\tikzstyle{point}=[circle,fill=black,inner sep=0.1mm]
\theoremstyle{plain}
\newtheorem{theorem}{Theorem}
\newtheorem{lemma}{Lemma}
\theoremstyle{definition}
\newtheorem{definition}{Definition}
\theoremstyle{remark}
\date{}
\title{A dichotomy for graphs of bounded degeneracy}
\author{A. Atminas\thanks{Department of Mathematical Sciences, Xi'an Jiaotong-Liverpool University, 111 Ren'ai Road, Suzhou 215123, China. Email: Aistis.Atminas@xjtlu.edu.cn} 
\and V. Lozin\thanks{Mathematics Institute, University of Warwick, Coventry, CV4 7AL, UK. Email:  V.Lozin@warwick.ac.uk}}
\begin{document}
\maketitle

\newtheorem{obs}{Observation}

\newtheorem{prop}{Proposition}
\newtheorem{cor}{Corollary}

\def\N{\mathbb{N}}
\def\t{\sim}
\def\nt{\nsim}
\def\1{n+1}
\def\2{n+2}

\begin{abstract}
We prove that the degeneracy of graphs in a hereditary class defined by a finite set $S$ of forbidden induced subgraphs is bounded 
if and only if $S$ includes a complete graph, a complete bipartite graph and a forest.  
\end{abstract}

{\it Keywords}: Degeneracy; Hereditary class; Ramsey theory

%%%%%%%%%%%%%%%%%%%%%%%%%%%%%%%%%%%%%%%%%%%%

\section{Introduction}

%%%%%%%%%%%%%%%%%%%%%%%%%%%%%%%%%%%%%%%%%%%%
Degeneracy is a graph parameter between tree-width and chromatic number in the sense that bounded tree-width implies bounded degeneracy, 
while bounded degeneracy implies bounded chromatic number. According to the {G}y\'{a}rf\'{a}s-{S}umner conjecture \cite{Sumner}, 
the chromatic number is bounded in a hereditary class defined by a finite set $S$ of forbidden induced subgraphs (finitely defined class)
if and only if $S$ includes a complete graph and a forest. 
%The ``only if'' part of this conjecture follows from the celebrated result of Erd\H{o}s \cite{Erdos} stating 
%that graphs of large girth (the length of a smallest cycle) can have arbitrarily large chromatic number. The ``if'' part of 
The conjecture was verified for a variety of classes, but in general it is widely open. 

For tree-width, a dichotomy for finitely defined classes was proved in \cite{tree-width}, where it was shown that 
the tree-width of graphs in a hereditary class defined by a finite set $S$ of forbidden induced subgraphs is bounded 
if and only if $S$ includes a complete graph, a complete bipartite graph, a tripod (a forest in which every connected component 
has at most 3 leaves) and the line graph of a tripod. 

In the present paper, we extend the dichotomy for tree-width to a dichotomy for degeneracy. 
We prove that the degeneracy of graphs in a hereditary class defined by a finite set $S$ of forbidden induced subgraphs is bounded 
if and only if $S$ includes a complete graph, a complete bipartite graph and a forest. 
Our result generalizes several partial results on this topic and implies, in particular, that 
the {G}y\'{a}rf\'{a}s-{S}umner conjecture is valid for hereditary classes of graphs excluding an induced complete bipartite subgraph.

In the terminology of Ramsey theory, our result shows that complete graphs, complete bipartite graphs and forests are the only three 
unavoidable structures in large graphs from finitely defined classes. For other connections between degeneracy and Ramsey theory we refer the reader to \cite{Lee}.

The organization of the paper is as follows. In the rest of this section, we introduced basic terminology and notation. Section~\ref{sec:main}
is devoted to the proof of the main result. In Section~\ref{sec:con}, we conclude the paper with an open problem.

All graphs in this paper are finite, undirected, without loops or multiple edges.
A graph $H$ is a {\it subgraph} of a graph $G$ if $H$ can be obtained from $G$ be vertex and edge deletions,
and $H$ is an {\it induced subgraph} of $G$ if $H$ can be obtained from $G$ be vertex deletions only,
and $H$ is a {\it spanning subgraph} of $G$ if $H$ can be obtained from $G$ be edge deletions only.
If $H$ is not an induced subgraph of $G$, then we say that $G$ is $H$-free and that $H$ is a forbidden induced subgraph for $G$.  

A class of graphs is {\it hereditary} if it is closed under taking induced subgraphs. Every hereditary class $\cal X$ can be uniquely 
described in terms of minimal forbidden induced subgraphs, i.e. minimal (with respect to the induced subgraph relation) graphs not in $\cal X$.
If the number of minimal forbidden induced subgraphs for $\cal X$ is finite, we say that $\cal X$ is {\it finitely defined}.
 
Two vertices in a graph $G$ are {\it neighbours} if they are adjacent to each other, and {\it non-neighbours} otherwise. 
The {\it degree} of a vertex is the number of its neighbours.
The {\it degeneracy} of $G$ is the minimum $k$ such that every induced subgraph of $G$ contains a vertex of degree at most $k$.

By $K_n$ and $K_{n,m}$ we denote a complete graph (a clique) with $n$ vertices and a complete bipartite graph (a biclique) with parts of size $n$ and $m$, respectively. 
Also, $C_n$ is a chordless cycle of length $n$. An {\it independent set} in a graph is a subset of pairwise non-adjacent vertices. 

The Ramsey number $R(p,q)$ is the minimum $n$ such that every graph with at least $n$ vertices has either an independent set of size $p$ 
or a clique of size $q$.

Given two disjoint subsets $U$ and $W$ of vertices in a graph $G$, we will say that $U$ and $W$ are {\it complete} to each other if there are all possible edges between them,
and that $U$ and $W$ are {\it anti-complete} to each other if there are no edges between them.

A {\it forest} is a graph without cycles and a {\it tree} is a connected forest. A {\it rooted tree} is a tree with a designated vertex. 
We use standard terminology related to rooted trees, such as {\it child, parent, descendant, ancestor, height of the tree}, etc.
Also, we call the set of vertices of distance $p$ from the root {\it generation} $p$.

%%%%%%%%%%%%%%%%%%%%%%%%%%%%%%%%%%%%%%%%%%%%

\section{Main result}
\label{sec:main}

%%%%%%%%%%%%%%%%%%%%%%%%%%%%%%%%%%%%%%%%%%%%

In this section, we will prove the following theorem.

\begin{theorem}\label{thm:main}
Let $\cal X$ be a hereditary class defined by a finite set $S$ of forbidden induced subgraphs. 
There is a constant bounding the degeneracy of graphs in $\cal X$ if and only if $S$ includes a complete graph, a complete bipartite graph and a forest.
\end{theorem}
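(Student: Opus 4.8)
The plan is to prove the two directions separately, with the forward (necessity) direction being routine and the backward (sufficiency) direction carrying all the weight. For necessity I argue by contraposition, splitting into three cases according to which of the three types is missing from $S$. If $S$ contains no complete graph, then, every induced subgraph of $K_n$ being a complete graph, we get $K_n\in\mathcal X$ for all $n$, and $K_n$ has degeneracy $n-1$. If $S$ contains no complete bipartite graph, then, since every induced subgraph of $K_{n,n}$ is again a complete bipartite graph (an independent set being the degenerate case $K_{0,m}$, a convention one must adopt for the statement to be an exact equivalence), we get $K_{n,n}\in\mathcal X$ for all $n$, of degeneracy $n$. Finally, if $S$ contains no forest, let $t$ be the maximum number of vertices of a graph in $S$ and invoke the classical fact that for every $k$ there is a graph $H$ of minimum degree at least $k$ and girth greater than $t$; then any set of at most $t$ vertices of $H$ induces a forest, so no (non-forest) member of $S$ embeds in $H$, whence $H\in\mathcal X$ while its degeneracy is at least $k$. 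In each case the degeneracy is unbounded.

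For sufficiency, let $K_p$, $B$ and $F$ be a clique, a complete bipartite graph and a forest in $S$, put $f=|V(F)|$, and fix $q$ with $B$ an induced subgraph of $K_{q,q}$; then every $G\in\mathcal X$ is $K_p$-free, is $F$-free, and contains no induced $K_{q,q}$ (such a copy would contain the forbidden $B$). I first reduce to minimum degree: with the paper's definition, the degeneracy of $G$ equals $\max_H \delta(H)$ over induced subgraphs $H$ of $G$, each of which again lies in $\mathcal X$; hence it suffices to produce a constant $D=D(p,q,f)$ so that every graph of minimum degree at least $D$ contains an induced $K_p$, an induced $K_{q,q}$, or an induced copy of $F$, for then every $G\in\mathcal X$ has minimum degree below $D$ and therefore degeneracy below $D$. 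I then reduce the forest to a single tree: the complete $f$-ary tree of height $f$ contains every forest on $f$ vertices as an induced subgraph (place the components in distinct branches and exploit the height so that non-adjacent vertices never occupy consecutive levels of one branch), so it is enough to force $K_p$, $K_{q,q}$, or a large induced complete $f$-ary tree in any graph of sufficiently large minimum degree.

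To build the tree I grow it generation by generation inside $G$, maintaining an induced rooted subtree together with, at each current leaf $\ell$, a large private reservoir of candidate descendants that is \emph{clean}: its vertices are non-adjacent to the already-built tree except at $\ell$, and non-adjacent to the reservoirs of sibling leaves. To advance one generation at $\ell$ I apply Ramsey's theorem inside its reservoir: either I find a clique of size $p$ and stop with an induced $K_p$, or I extract a large independent set to serve as the next generation of children. I then re-purify by pigeonholing on adjacency patterns to the bounded built tree and by discarding candidates carrying cross-adjacencies, restoring cleanliness and privacy for the next round; iterating $f$ times yields the desired induced complete $f$-ary tree. Clique-freeness supplies the children at each Ramsey step, while the delicate case is when purification fails, that is, when too many candidates carry the same bad cross-adjacency.

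The main obstacle is exactly this failure analysis: proving that the only way the reservoirs can resist being cleaned is through a large \emph{induced} biclique. Because $K_{q,q}$ is two-sided, it does not suffice to exhibit a single vertex adjacent to many reservoir vertices (that is only a star, which is harmless, being itself a forest); one must produce $q$ vertices on each side simultaneously, with both sides independent so that the biclique is induced. Doing this requires a two-dimensional, iterated Ramsey argument: first extract many reservoir vertices sharing a common set of neighbours in the built part, then use the high minimum degree of those vertices together with $K_p$-freeness to enlarge the opposite side into a second independent set complete to the first. It is the bookkeeping of these nested applications, run across all $f$ generations, that forces $D(p,q,f)$ to grow tower-like in $p$, $q$ and $f$, and controlling the tension between keeping the extracted tree induced and avoiding a biclique is the heart of the proof.
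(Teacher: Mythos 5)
Your necessity argument is fine and matches the paper's (the paper quotes Erd\H{o}s's chromatic-number result for triangle-free graphs of large girth where you use high-girth high-minimum-degree graphs directly; both work). Your two reductions for sufficiency are also sound: degeneracy equals the maximum of $\delta(H)$ over induced subgraphs, and every forest on $f$ vertices is an induced subgraph of a complete $f$-ary tree of suitable height, so it suffices to show that large minimum degree forces an induced $K_p$, an induced $K_{q,q}$, or a large induced complete $f$-ary tree. This is exactly the shape of the paper's Theorem~\ref{thm:2} combined with Lemma~\ref{lem:1}. The gap is in the core of that step, and it is not merely unfinished bookkeeping: the invariant you propose to maintain (each reservoir anticomplete to the built tree except at its own leaf $\ell$) is \emph{not restorable} in the way you claim, because the failure mode is not always a biclique. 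Concretely, suppose every candidate child of $\ell$ is also adjacent to $\ell$'s grandparent $g$ in the built tree. Since $g\not\sim\ell$, this gives only a $K_{2,m}$, which contains no $K_{q,q}$ for $q\ge 3$ and no clique; yet it can wipe out the \emph{entire} reservoir. A concrete obstruction: take a huge tree and join every vertex to all of its generation-distance-$3$ descendants. This graph is triangle-free, has no large induced biclique, and has huge minimum degree, but any attempt to grow an induced tree while keeping descendants non-adjacent to ancestors stalls, because every prospective third-generation vertex is adjacent to the root. Your proposed rescue (``enlarge the opposite side into a second independent set complete to the first'' using high minimum degree) cannot succeed here because the structure you are trying to exhibit simply is not present.

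The paper's resolution of exactly this difficulty is the part your plan is missing. It does \emph{not} insist on cleaning ancestor--descendant adjacencies; it first produces a ``descendant-only'' tree (all non-tree edges go between a vertex and its descendants), then uses a majority-vote argument to make each pair of generations either complete or anticomplete along descendant chains, encodes the result in a ``generation graph'' on the set of generations, and finally invokes a Ramsey-type theorem of Galvin, Rival and Sands for traceable graphs to extract an induced \emph{increasing} path of generations, which is what yields the induced $n$-tree. This last ingredient is a genuinely separate idea with no counterpart in your sketch. A second, smaller gap: cleaning the cross-adjacencies between sibling reservoirs does require showing that many pairwise-joined bounded-size vertex sets force a large complete bipartite subgraph (the paper imports this as Lemma~\ref{lem:2} from the tree-width dichotomy paper); you correctly sense that a ``two-dimensional'' Ramsey statement is needed there, but the mechanism you describe for manufacturing the second side of the biclique is not an argument. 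So the proposal correctly locates the hard part of the theorem but does not prove it, and the specific repair strategy it offers for the main obstacle provably fails on the example above.
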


The ``only if'' direction is easy and can be proved as follows. First of all, if the class $\cal X$ contains  cliques $K_n$ or bicliques $K_{n,n}$ 
for arbitrarily large values of $n$, then the degeneracy is unbounded in $\cal X$, since the degeneracy of $K_{n}$ is $n-1$ and the degeneracy of  $K_{n,n}$ is $n$. 
Secondly, if no forest is excluded, then each of the finitely many forbidden induced subgraphs contains a cycle, 
and hence $\cal X$ contains the class of $(C_3, C_{4}, \ldots, C_k)$-free graphs for some finite value of $k$. 
According to Erd\H{o}s \cite{Erdos}, the chromatic number is unbounded in the class of $(C_3, C_{4}, \ldots, C_k)$-free graphs for each fixed value of $k$. 
Therefore, the degeneracy is unbounded in this class, as graphs of degeneracy at most $d$ have chromatic
number bounded by $d+1$. Thus, the ``only if'' direction of Thoerem~\ref{thm:main} holds: for a finitely defined class of graphs to be
of bounded degeneracy, we must exclude a clique, a biclique and a forest. The rest of the section will be devoted to proving the ``if'' direction. 
We start with the following lemma.

\begin{lemma}\label{lem:1}
Let $T$ be a tree on $d$ vertices. Then any graph $G$ of degeneracy at least $d$ contains $T$
as a (not necessarily induced) subgraph.
\end{lemma}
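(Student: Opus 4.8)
The plan is to reduce the statement to the well-known greedy embedding of a tree into a graph of large minimum degree, in two stages: first extract a subgraph of large minimum degree from the degeneracy hypothesis, then embed $T$ vertex by vertex. For the first stage, I would unwind the definition of degeneracy. Since the degeneracy of $G$ is at least $d$, the value $k=d-1$ fails the defining condition, so it is not true that every induced subgraph of $G$ contains a vertex of degree at most $d-1$. Hence there is an induced subgraph $H$ of $G$ in which every vertex has degree (measured within $H$) at least $d$. It suffices to find $T$ as a subgraph of $H$.

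For the embedding, I would root $T$ at an arbitrary vertex and order its vertices $v_1,v_2,\ldots,v_d$ so that $v_1$ is the root and each $v_i$ with $i\ge 2$ comes after its parent; a breadth-first ordering does this. The key structural property is that, because $T$ is a tree, every $v_i$ with $i\ge 2$ has exactly one neighbour among $v_1,\ldots,v_{i-1}$, namely its parent. I would then build an injection $\varphi\colon V(T)\to V(H)$ sending adjacent vertices of $T$ to adjacent vertices of $H$, processing the vertices in this order. Map $v_1$ to an arbitrary vertex of $H$. Assuming $v_1,\ldots,v_{i-1}$ have already been mapped to distinct vertices, let $v_j$ ($j<i$) be the parent of $v_i$; I would choose $\varphi(v_i)$ to be any neighbour of $\varphi(v_j)$ in $H$ not already used.

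The one point to verify is that such a free neighbour always exists, and here the minimum-degree bound gives room to spare: $\varphi(v_j)$ has at least $d$ neighbours in $H$, while the images that could block it — the previously placed vertices other than $\varphi(v_j)$ itself — number at most $i-2\le d-2$. Thus at least two neighbours of $\varphi(v_j)$ remain available, so the greedy choice never fails, $\varphi$ stays injective, and each tree edge $v_jv_i$ is mapped to an edge of $H$. After all $d$ steps, $\varphi$ exhibits $T$ as a (not necessarily induced) subgraph of $H$, and therefore of $G$. I do not expect any genuine obstacle: the extraction of the min-degree subgraph is immediate from the definition, and the counting in the greedy step is the only routine thing to check.
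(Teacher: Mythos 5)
Your proof is correct and takes essentially the same approach as the paper: both extract an induced subgraph $H$ of minimum degree at least $d$ from the degeneracy hypothesis and then embed $T$ one leaf at a time using the degree bound, the only difference being that the paper packages the greedy extension as an induction on $d$ (deleting a leaf of $T$) while you unroll it as an explicit BFS-ordered embedding.
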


\begin{proof}
We prove the lemma by induction on $d$. For $d=1$, the result is obvious.
Now let $G$ be a graph of degeneracy at least $d\ge 2$. Then it contains an induced subgraph $H$
of degeneracy at least $d$ in which every vertex has degree at least $d$. 

Let $T$ be a tree with $d$ vertices. Since $d\ge 2$, $T$ contains a vertex $v$ of degree $1$. 
Let $u$ be the only neighbour of $v$ in $T$ and let $T' = T- v$.
By the inductive assumption, $H$ contains a subgraph isomorphic to $T'$.
Let $x$ be a vertex of $H$, which is the image of $u$ under this isomorphism. Since the degree of $x$ is at least $d$ in $H$
and $T'$ has $d-1$ vertices, vertex $x$ has a neighbour $y$ in $H$ that does not belong to the image of $T'$ in $H$.
Mapping $v$ to $y$ gives us an isomorphism between $T$ and a subgraph of $H$.
\end{proof}

\begin{definition}
Let $T$ be a rooted tree and $k_1,k_2,\ldots,k_n$ be a sequence of natural numbers. We say that $T$ is a $(k_1,k_2,\ldots,k_n)$-tree if
each vertex in generation $0 \leq i<n$ has exactly $k_{i+1}$ children.
If $k_1=k_2=\ldots=k_n=n$, then we say that $T$ is an $n$-tree.  
\end{definition}

To prove the main result of the paper, we will show the following Ramsey-type theorem. 

\begin{theorem}\label{thm:2}
For every $n\in \mathbb N$, there exists an $N\in {\mathbb N}$  such that any graph containing an
$N$-tree as a subgraph, contains either $K_n$ or $K_{n,n}$ or an $n$-tree as an induced subgraph.
\end{theorem}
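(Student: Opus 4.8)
The plan is to take the embedded $N$-tree $T$ (a complete $N$-ary tree of height $N$, all of whose parent--child edges are present in $G$) and prune it to a highly structured complete subtree on which the only possible \emph{extra} edges among tree vertices are of two kinds: \emph{vertical} edges, joining a vertex to one of its proper descendants other than a child, and \emph{horizontal} edges, joining two incomparable vertices. If we can kill all such edges while retaining a complete $n$-ary subtree of height $n$, the induced subgraph on its vertices is exactly an $n$-tree; otherwise the surviving extra edges should be plentiful enough to force a $K_n$ or a $K_{n,n}$. Thus the whole argument is a case analysis driven by Ramsey-type pruning.

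First I would homogenise. Using a Ramsey-type theorem for trees (Milliken's strong subtree theorem in its finite quantitative form, or an explicit iteration of Ramsey's theorem generation by generation), I pass to a large complete subtree $T'$ in which the adjacency of any two vertices $x,y$ in $G$ depends only on the \emph{type} of the pair, namely on the depth of $x$, the depth of $y$, and the depth of their nearest common ancestor. Since $T'$ may be taken with huge branching and huge height, it still contains a complete $m$-ary subtree of height $m$ for any prescribed $m$, so no generality is lost for the final extraction.

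Next I dispose of horizontal edges. After homogenisation, whether two incomparable vertices are adjacent depends only on their two depths and on the depth of their meet. If the pattern for two children of a common vertex is ``edge'', then all (huge-many) children of any fixed vertex are pairwise adjacent and we obtain $K_n$. If instead some pattern with a deeper endpoint is ``edge'', then taking all the depth-$a$ descendants under one child together with all the depth-$b$ descendants under a sibling gives a complete bipartite graph between two large sets; a further application of Ramsey inside each side either produces a $K_n$ or trims the two sides to independent sets, yielding an induced $K_{n,n}$. Hence we may assume that \emph{no} horizontal edge occurs in $T'$, so any set of pairwise incomparable vertices of $T'$ is independent.

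The remaining, and main, obstacle is the vertical direction. Restricting to a single root--leaf branch of $T'$ gives an ordered graph on the depths $0,1,\dots,h$ in which consecutive depths are adjacent (the parent--child edges). I would run the Galvin--Rival--Sands ``farthest neighbour'' construction down the branch: starting at the top, repeatedly jump from the current vertex to its \emph{deepest} neighbour on the branch, producing a strictly descending sequence $u_0,u_1,u_2,\dots$ with $u_i$ adjacent to $u_{i+1}$ but non-adjacent to $u_{i+2}$. If along a long stretch each $u_i$ has its deepest neighbour at the very next step, that stretch is already an induced, depth-monotone path; otherwise many vertices have ``long'' neighbours and a Galvin--Rival--Sands-type interval/crossing argument extracts a large complete bipartite subgraph, which as before yields $K_n$ or an induced $K_{n,n}$. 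In the first case I use the depths occurring on the long induced path as the levels of the final tree: selecting a complete $n$-ary subtree of $T'$ supported on exactly these depths, the comparable pairs realise the path pattern (consecutive chosen depths adjacent, all others non-adjacent) while incomparable pairs are non-adjacent by the previous paragraph, so the induced subgraph is precisely an $n$-tree. The delicate point -- and where I expect to spend the most effort -- is this vertical step: ensuring the induced path found is \emph{monotone in depth} (so its depths can serve as tree levels) and that the failure to find one genuinely forces a biclique rather than some intermediate configuration. Choosing the quantitative bounds ($N$ as a function of $n$) so that every pruning step leaves enough branching and height for the next is routine book-keeping once the qualitative steps are in place.
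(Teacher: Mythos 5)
Your proposal is correct and follows essentially the same route as the paper: first eliminate horizontal edges between incomparable vertices (or find $K_n$ or $K_{n,n}$), then homogenise vertical adjacency so that it depends only on the generations involved, and finally apply the Galvin--Rival--Sands theorem to the resulting graph on generations to extract an induced increasing path, which lifts to an induced $n$-tree. The only differences are cosmetic: the paper replaces your Milliken-style homogenisation with an elementary majority-vote cleaning procedure (plus a Ramsey lemma on disjoint vertex subsets for the horizontal step), and it quotes the Galvin--Rival--Sands result as a black box rather than re-running their farthest-neighbour argument.
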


We note that this result proves the ``if'' direction of Theorem~\ref{thm:main}. Indeed, 
assume the degeneracy of graphs in $\cal X$ is unbounded. Then,  by Lemma~\ref{lem:1}, 
for any $N\in {\mathbb N}$ there is a graph in $\cal X$ that contains an $N$-tree as a subgraph. 
This implies, by Theorem~\ref{thm:2}, that for any $n\in {\mathbb N}$ there is a graph in $\cal X$ 
that contains either a clique $K_n$, a biclique $K_{n,n}$ or an $n$-tree as an induced subgraph.  
Since each forest is an induced subgraph of an $n$-tree for some $n$, we conclude that $\cal X$ contains 
either all cliques or all bicliques or all forests, and therefore, either a clique, a biclique or 
a forest does not appear in the set $S$ of forbidden induced subgraphs for $\cal X$, proving the ``if'' direction.

We will prove Theorem~\ref{thm:2} by Ramsey-type arguments cleaning-up the non-induced copy of the $N$-tree to obtain 
an induced copy of an $n$-tree, or else a clique $K_n$ or an induced biclique $K_{n,n}$ appears. 
The cleaning will be done in two stages producing what we call descendant-only and linked-generation-descendant-only $N$-trees in Sections~\ref{sec:1} and ~\ref{sec:2}, respectively.
Section~\ref{sec:3} will use the cleaned-up version of linked-generation-descendant-only $N$-trees to deduce Theorem~\ref{thm:2}.

%%%%%%%%%%%%%%%%%%%%%%%%%%%%%%%%%%%%%%%%%%%%%%%%%%%%%%%%%%%%%%%%%%%%%%%%%%%%%%%%%%%%%%%%%%%%%%%%%%%%%%%%%%%%
\subsection{From large subgraph $N$-trees to large induced descendant-only $M$-trees}
\label{sec:1}
%%%%%%%%%%%%%%%%%%%%%%%%%%%%%%%%%%%%%%%%%%%%%%%%%%%%%%%%%%%%%%%%%%%%%%%%%%%%%%%%%%%%%%%%%%%%%%%%%%%%%%%%%%%%

A graph $G$ containing an $N$-tree $T$ as a spanning subgraph will be called an $N$-graph. We will use the two terms, $N$-graph and $N$-tree, interchangeably,
and will extend the terminology used for $T$ (root, parent, child, descendant, generation, etc.) to the graph $G$. The edges of $G$ that do not belong to $T$ will be called {\it non-tree edges}.  

\begin{definition}
We will say that an $N$-graph $G$ is a {\it descendant-only} $N$-tree if every non-tree edge of $G$ connects two vertices one of which is a descendant of the other.  
\end{definition}

\begin{lemma}\label{lem:2}
For any positive integers $k$ and $p$, there is a positive integer $U=U(k,p)$ such that if a graph $G$ contains a family of $U$ disjoint vertex subsets 
$V_1,V_2,\ldots,V_U$ each of size at most $k$, then $G$ contains either an induced $K_p$, an induced $K_{p,p}$ or a subfamily $V_{i_1},V_{i_2},\ldots,V_{i_p}$ of $p$ subsets 
with no edges between any two of them.
\end{lemma}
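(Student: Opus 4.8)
The plan is to prove the lemma by a sequence of nested Ramsey and pigeonhole reductions: at each stage we either terminate with one of the three desired outcomes or pass to a large sub-family carrying additional uniform structure. Throughout, $U$ is chosen large enough that every reduction below outputs a structure of the size required by the next one; since each intermediate bound is finite, so is the resulting $U=U(k,p)$.

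First I would normalise the sets. Discarding empty sets (if $p$ of them are empty the third outcome holds trivially), the sizes lie in $\{1,\dots,k\}$, so by pigeonhole a sub-family of at least $U/k$ sets shares a common size $s_0\le k$; relabel the vertices of each such set as $w_i^1,\dots,w_i^{s_0}$. Next, form the \emph{reduced graph} on the index set of these sets, joining $i$ and $j$ whenever there is at least one edge of $G$ between $V_i$ and $V_j$. Applying Ramsey's theorem to this graph (via the Ramsey number $R(p,m)$ for a suitable $m$) yields either an independent set of size $p$, which gives $p$ pairwise anti-complete sets and hence the third outcome, or a clique, i.e.\ a sub-family $W_1,\dots,W_m$ that is \emph{pairwise connected} in the sense that every two of its sets are joined by at least one edge.

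The heart of the argument is to extract an induced $K_p$ or $K_{p,p}$ from this pairwise-connected family, and this is the step I expect to be the main obstacle: being connected supplies only a single edge between each pair, located at uncontrolled vertices, so without further cleaning the family need contain no concentrated clique or biclique. To align the edges, I fix an ordering of $W_1,\dots,W_m$ and, for each pair $i<j$, choose one representative edge and colour the pair by the slot pair $(s,t)\in\{1,\dots,s_0\}^2$ of its two endpoints. By Ramsey's theorem with at most $k^2$ colours we obtain a large monochromatic clique of sets with some fixed colour $(s,t)$, so that, writing $x_i=w_i^s$ and $y_i=w_i^t$, we have $x_i\sim y_j$ for every pair $i<j$ in this sub-family.

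Finally I would clean up the internal adjacencies. If $s=t$ then $x_i=y_i$ and $x_i\sim x_j$ for all $i<j$, so the vertices $x_i$ already induce a $K_p$. If $s\neq t$, a further Ramsey step with four colours (recording whether $x_i\sim x_j$ and whether $y_i\sim y_j$) produces a sub-family in which each of $X=\{x_i\}$ and $Y=\{y_i\}$ is internally homogeneous; if either is a clique we obtain an induced $K_p$, and otherwise both $X$ and $Y$ are independent sets. In this remaining case I would exploit the ordered, half-graph structure: splitting the surviving indices into a lower half $L$ and its complement, the sets $A=\{x_i:i\in L\}$ and $B=\{y_j:j\notin L\}$ satisfy $i<j$ for all choices, hence $A$ is complete to $B$, while $A\subseteq X$ and $B\subseteq Y$ are independent and vertex-disjoint (the index ranges $L$ and its complement are disjoint, and $s\neq t$). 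The induced subgraph on $A\cup B$ is then precisely a $K_{p,p}$. Tracking the size thresholds so that this last family has at least $2p$ indices completes the proof.
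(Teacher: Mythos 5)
Your proof is correct, and while it opens the same way as the paper's, the heart of the argument is genuinely different. Both proofs begin by forming the auxiliary graph on the index set (an edge when two subsets are joined by at least one edge of $G$) and applying Ramsey's theorem to obtain either $p$ pairwise anti-complete subsets or a large pairwise-connected subfamily. At that point the paper invokes a black-box lemma from its reference on tree-width: a sufficiently large pairwise-connected family of sets of bounded size forces a (not necessarily induced) $K_{t,t}$ subgraph with $t=R(p,p)$, after which one application of Ramsey inside each part yields either a $K_p$ or two independent sets of size $p$ that are complete to each other, i.e.\ an induced $K_{p,p}$. You instead replace the citation with a self-contained extraction: pigeonhole on set sizes, a $k^2$-colour Ramsey step aligning the representative edges into fixed slots $(s,t)$, a $4$-colour Ramsey step homogenising the internal adjacencies of $X=\{x_i\}$ and $Y=\{y_i\}$, and finally the lower-half/upper-half split that converts the ordered condition ``$x_i\sim y_j$ for $i<j$'' into a genuine induced $K_{p,p}$ (the split is exactly what is needed, since only cross-pairs with a low index on the $X$ side and a high index on the $Y$ side are controlled). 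Your route costs a couple of extra Ramsey iterations and a worse bound on $U$, but it is elementary and self-contained, whereas the paper's version is shorter at the price of importing the auxiliary result; the correctness of the lemma is unaffected either way. One small point worth making explicit in a final write-up is the degenerate case $s=t$, which you do handle, and the bookkeeping guaranteeing at least $2p$ surviving indices at the last stage, which you flag but should carry out.
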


\begin{proof}
In \cite{tree-width}, it was shown that for all positive integers $a$ and $b$, there is a positive integer $C=C(a,b)$ such that if 
a graph $G$ contains a collection of $C$ pairwise disjoint subsets of vertices, each of size at most $a$ 
and with at least one edge between any two of them, then $G$ contains a $K_{b,b}$-subgraph. Now we apply this result 
to prove the lemma with $U=R(p,C(k,R(p,p)))$. Indeed, if we have a collection of $U$ disjoint subsets, then according to Ramsey, 
either we have $p$ of the subsets with no edges between any two of them, in which case we are done, or we have $C(k,R(p,p))$ subsets with at least one edge between any two of them.
In the latter case, we must have a $K_{t,t}$ subgraph with $t=R(p,p)$. Then among the $t$ vertices in each part of the graph we either find a clique of size $p$, in which case we are done,
or we find an independent set in each part and hence an induced $K_{p,p}$.     
\end{proof}

\begin{theorem}\label{thm:3}
For any positive integer $M$, there is a positive integer $N=N(M)$ such that any $N$-graph contains either
a descendant-only $M$-tree, a clique $K_M$ or a biclique $K_{M,M}$ as an induced subgraph.
\end{theorem}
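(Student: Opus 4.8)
The plan is to start with a large $N$-tree inside $G$ and iteratively "clean" the non-tree edges so that only descendant-type edges survive, while passing to a large subtree at each step. Let me think about what obstruction each type of non-tree edge presents and how to handle it via Lemma~\ref{lem:2}.

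In the $N$-tree, a non-tree edge joins two vertices $x,y$. There are two cases: either one is an ancestor of the other (descendant edge — these are allowed), or they lie in "incomparable" positions (neither is an ancestor of the other). I need to eliminate the second type. For incomparable vertices, their lowest common ancestor is some vertex $z$ strictly above both, and $x,y$ descend into different children-subtrees of $z$. The strategy should be to work generation by generation and, for each vertex, look at the subtrees hanging below its children; I want to select a large subfamily of these child-subtrees that are pairwise anti-complete (no cross edges), since such edges are exactly the forbidden "sideways" edges. Lemma~\ref{lem:2} is precisely the tool for this: a large family of bounded-size vertex sets yields either $K_p$, $K_{p,p}$, or a large anti-complete subfamily.

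Here is the intended induction/recursion. I would proceed top-down in $\ell$ phases, one per generation level, where $\ell = M$ is the desired height. At each node currently in the partial tree, it has $N$ children, each heading a subtree; I first chop these subtrees to a common bounded height (say height $h$ corresponding to how far down I still need to go, so each has at most $k$ vertices with $k = k(M)$ a fixed bound), then apply Lemma~\ref{lem:2} with parameter $p$ large enough to continue. This returns either an induced $K_p \supseteq K_M$, an induced $K_{p,p}\supseteq K_{M,M}$ (in which cases we are done), or $p$ of the child-subtrees that are pairwise anti-complete. Keeping only those $p$ children, I have removed all sideways edges among distinct children of this node at the processed level, and I recurse into the retained subtrees. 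The parameter $N$ is obtained by unwinding the recursion: the required branching $N$ must be large enough that after repeatedly applying Lemma~\ref{lem:2} (which demands $U(k,p)$ children to guarantee $p$ anti-complete ones) down through $M$ generations, each surviving node still has $M$ children. This forces an iterated/tower-type definition of $N=N(M)$ in terms of nested applications of $U$.

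The main obstacle will be bookkeeping the order of cleaning so that anti-completeness established at one node is not destroyed when processing deeper or shallower nodes, and ensuring the bounded-size hypothesis of Lemma~\ref{lem:2} is met uniformly. The subtlety is that eliminating sideways edges only between children of a \emph{common} parent does not by itself rule out sideways edges between vertices whose lowest common ancestor is higher up; I expect the argument to exploit that once every parent's retained child-subtrees are pairwise anti-complete, a sideways edge between two incomparable vertices $x,y$ would force an edge between the two distinct child-subtrees of their lowest common ancestor $z$, contradicting the anti-completeness enforced at $z$. Making this inductive invariant precise — that after processing all nodes down to generation $i$, every remaining sideways edge has its lowest common ancestor below generation $i$ — is the crux, and it should yield a descendant-only $M$-tree once all $M$ generations are processed. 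I would therefore set up the cleaning so that the anti-completeness is applied to whole child-subtrees (not just individual children), which guarantees the invariant is preserved under recursion.
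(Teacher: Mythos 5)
Your proposal is correct and follows essentially the same route as the paper: apply Lemma~\ref{lem:2} to the whole child-subtrees at each node to extract $M$ pairwise anti-complete ones (or find $K_M$/$K_{M,M}$), recurse, and observe that any surviving sideways edge would violate the anti-completeness enforced at the lowest common ancestor. The paper formalizes your ``unwinding'' by defining the branching factors bottom-up ($k_1=R(M,M)$, $k_t=U(S(k_{t-1},\ldots,k_1),M)$) so that the subtree sizes fed into Lemma~\ref{lem:2} are well-defined before the next level's branching is fixed, and inducts on the height.
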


\begin{proof}
For a sequence of positive integers $p_1,p_2,\ldots,p_t$,  we denote by $S(p_1,p_2,\ldots,p_t)$ the number of vertices in the $(p_1, p_{2}, \ldots, p_t)$-tree, i.e.  
$$S(p_1,p_2,\ldots,p_t)=1+p_1+p_1p_{2}+p_1p_{2}p_{3}+\ldots+p_1p_{2}\cdots p_{t}.$$
Now, given a positive integer $M$, we define the sequence of positive integers $k_1,k_2,\ldots$ recursively as follows:
\begin{itemize}
\item $k_1=R(M,M)$ and 
\item for $t>1$, $k_t=U(S(k_{t-1},k_{t-2},\ldots,k_1),M)$, where $U$ is the function from Lemma~\ref{lem:2}.
\end{itemize}
Next, we show by induction on $t$ that any $(k_t,k_{t-1},\ldots,k_1)$-graph contains either a descen\-dant-only $(M,M,\ldots,M)$-tree of height $t$ rooted
at the same vertex as the original graph, a clique $K_M$ or a biclique $K_{M,M}$ as an induced subgraph.
For $t=1$, the statement is trivial, so assume $t>1$ and let $G$ be a $(k_t,k_{t-1},\ldots,k_1)$-graph with a root vertex $v$. 
By definition, $v$ has $k_t=U(S(k_{t-1}, k_{t-2}, \ldots, k_{1}),M)$ children each of which is the root of a subtree with $S(k_{t-1}, k_{t-2}, \ldots,k_{1})$ vertices. 
Therefore, by Lemma~\ref{lem:2}, $G$ has either an induced $K_M$, an induced $K_{M,M}$ or a collection of $M$ subtrees rooted at the children of $v$ with no edges between any two of them. 
If $G$ contains $K_M$ or $K_{M,M}$, then we are done, so 
consider the case when we have $M$ subtrees rooted at $M$ children of $v$ with no edges between any two of them. 
By the inductive assumption, each of these subtrees contains either an induced $K_M$, an induced $K_{M,M}$ or 
an induced descendant-only $(M,M,\ldots,M)$-tree of height $t-1$ rooted at a child of $v$. 
Once again, if any of these subtrees  contains an induced $K_M$ or an induced $K_{M,M}$, then we are done. 
Otherwise, we have $M$ descendant-only $(M,M,\ldots,M)$-trees of height $t-1$ rooted at $M$ children of $v$, with no edges
from one tree to another, hence $G$ contains an induced descendant-only $(M,M,\ldots,M)$-tree of height $t$.
The result now follows with $N=k_M$.
\end{proof}

\subsection{From descendant-only $M$-trees to linked-generation-descendant-only $K$-trees}
\label{sec:2}

We now introduce an even more restricted version of descendant-only $M$-tree.

\begin{definition}
Let $M$ be a positive integer, and let $T$ be a descendant-only $M$-tree. 
We say that $T$ is a {\it linked-generation-descendant-only} $M$-tree if for every two generations $0 \le i < j \le M$, 
either each vertex of generation $i$ is complete to the set of its descendants in generation~$j$
or each vertex of generation $i$ is anti-complete to the set  of its descendants in generation~$j$. 
\end{definition}

The main result of this section is the following theorem.

\begin{theorem}\label{thm:4} 
For every positive integer $K$, there exists a positive integer $M=M(K)$ such that every descendant-only $M$-tree contains a linked-generation-descendant-only $K$-tree.
\end{theorem}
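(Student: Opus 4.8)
The plan is to first restate the ``linked'' condition in a path-based form and then prove a strengthening of Theorem~\ref{thm:4} by induction on the height $K$. For an ancestor $u$ and a descendant $w$ write $\chi(u,w)=1$ if $uw$ is an edge and $0$ otherwise. I claim that a descendant-only $K$-tree is linked-generation-descendant-only exactly when there is a single master pattern $\tau$, assigning a bit $\tau(i,j)$ to every pair of generations $0\le i<j\le K$, with $\chi(u,w)=\tau(i,j)$ whenever $u$ is in generation $i$, $w$ is in generation $j$, and $w$ descends from $u$. Indeed, the linked condition for a fixed pair $(i,j)$ says precisely that all generation-$j$ vertices have the same adjacency to their generation-$i$ ancestor; imposing this for all pairs is the same as saying that all generation-$j$ vertices share one ancestor-adjacency vector, which is what $\tau$ records. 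Hence it suffices to extract a sub-$K$-tree in which ancestor-descendant adjacency depends only on the two generations involved.

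The difficulty in building such a subtree top-down is that, when several chosen subtrees are glued beneath a common root $r$, their vertices must be uniform not only internally but also in their adjacency to $r$ and to every ancestor of $r$. I would therefore carry this data along in the induction. Call a configuration a descendant-only $M$-tree $T$ with root $r$ together with a set $A$ of $a$ external ancestors (vertices outside $T$, each an ancestor of every vertex of $T$, with prescribed adjacencies). I will show, by induction on $K$, that there is $M(K,a)$ such that every configuration with $M\ge M(K,a)$ contains, rooted again at $r$, a uniform sub-$K$-tree: there is a master pattern on the $K+1$ generations together with the $a$ external slots so that the adjacency of any two vertices in the ancestor-descendant relation, internal-internal or external-internal, depends only on their slots. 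Taking $a=0$ gives Theorem~\ref{thm:4} with $M=M(K,0)$, and the base case $K=0$ (a single vertex) is trivial.

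For the inductive step, assume the claim for $K-1$ and let $r$ be the root of an $M$-tree with external ancestors $A$. Each of the $M$ children $c_1,\dots,c_M$ roots a descendant-only $M$-tree of height $M-1\ge K-1$; regarding it as a configuration with external ancestors $A\cup\{r\}$, the induction yields inside it a uniform sub-$(K-1)$-tree rooted at $c_s$ with some master pattern. As the number $P(K-1,a+1)$ of possible such patterns is finite (the adjacencies among external ancestors being fixed), pigeonhole lets me select $K$ of these subtrees sharing one pattern $\tau$, provided $M\ge K\cdot P(K-1,a+1)$. Attaching them to $r$ gives a sub-$K$-tree rooted at $r$, and it is uniform: the common values of $\chi(r,\cdot)$ and $\chi(A,\cdot)$ recorded in $\tau$ make each generation uniform with respect to $r$ and to $A$, the shared internal part of $\tau$ makes internal ancestor-descendant adjacency level-determined, and, crucially, vertices in different chosen subtrees are never in the ancestor-descendant relation, so by the descendant-only hypothesis no edge joins them and they impose no constraint. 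Setting $M(0,a)=1$ and $M(K,a)=\max\{\,K\cdot P(K-1,a+1),\,M(K-1,a+1)\,\}$ closes the induction.

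The step I expect to be the crux is exactly this external-ancestor bookkeeping: uniformity of each child's subtree in isolation does not suffice, and the argument works only because we recurse with $r$ (and hence, inductively, all earlier ancestors) adjoined as external ancestors and then pigeonhole on the joint pattern, so that the glued tree inherits uniformity with respect to $r$ and above. The descendant-only hypothesis then supplies the rest for free by eliminating every cross-branch edge.
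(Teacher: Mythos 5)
Your reduction of the linked-generation condition to the existence of a single ``master pattern'' $\tau$ on pairs of generations is correct, and your overall strategy --- a top-down induction that carries the root and all earlier ancestors along as ``external ancestors'' and then pigeonholes the children's subtrees on their joint pattern --- is genuinely different from the paper's argument. (The paper instead fixes $M=K2^{\binom{K}{2}}$ and, for each pair of generations, runs a majority-vote ``cleaning'' step: it two-colours a generation by adjacency to its ancestors in the earlier generation, propagates the colouring upwards by majority, and deletes the minority side; each surviving vertex loses at most half of its children per pair, so $\binom{K}{2}$ rounds leave branching at least $K$. That yields an explicit single-exponential bound, whereas your double recursion in $K$ and $a$ gives a worse but still finite one.) Your key observation --- that the descendant-only hypothesis kills every cross-branch edge, so gluing uniform subtrees under $r$ only requires uniformity relative to $r$ and its ancestors, which is exactly what the external-ancestor bookkeeping supplies --- is correct and is the right crux to identify.

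There is, however, a concrete flaw in how the induction is set up: you induct on $K$ and let the branching factor of the extracted tree shrink along with its height. In your inductive step the root $r$ acquires $K$ children, but each of those children roots a uniform $(K-1)$-tree whose non-leaf vertices have only $K-1$ children; unwinding the recursion, the final object is a $(K,K-1,K-2,\ldots,1)$-tree, in which a generation-$i$ vertex has $K-i$ children. For $K\ge 2$ this tree does not contain a $K$-tree (its deepest internal vertices have a single child), so it is not the linked-generation-descendant-only $K$-tree that Theorem~\ref{thm:4} demands. The repair is routine: decouple the two parameters and induct on the height $h$ alone, extracting at each stage a uniform tree of height $h$ in which every non-leaf vertex has exactly $K$ children (so you pigeonhole to find $K$ children of $r$ whose height-$(h-1)$, branching-$K$ uniform subtrees share one pattern); alternatively, run your induction with $2K-1$ in place of $K$ and prune, since the first $K$ generations of a $(2K-1,2K-2,\ldots,1)$-tree have branching at least $K$. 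You should also add $1$ in the recursion: the subtree rooted at a child $c_s$ of the root of an $M$-tree only contains an $(M-1)$-tree, so you need $M-1\ge M(K-1,a+1)$ rather than $M\ge M(K-1,a+1)$. With these repairs the argument is sound.
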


\begin{proof}
Let $M = K2^{\binom K 2}$. Consider a descendant-only $M$-tree, and let $G$ be the graph containing the first $K$ generations of the tree. 

Let $v$ be the root of $G$ and let $V_i$ be the set of vertices of generation $i$ for $i\ge 2$. 
We colour the neighbours of $v$ in $V_i$ black and non-neighbours of $v$ in $V_i$ white. Then we propagate this colouring to the vertices in 
the previous generations according to the following majority rule: if a vertex in generation $j<i$ has more black children than white, we assign black colour to it, 
otherwise we assign white colour to it (breaking ties arbitrarily). 
Now if the root $v$ is black, then we remove from the graph all white vertices and all
their descendants, and if $v$ is white we remove all black vertices and all their descendants. 
After this procedure, which we call cleaning the relationship between $v$ and generation $i$, vertex $v$ is either complete or anti-complete to 
all vertices of generation $i$. It is not difficult to see that every vertex that survived in this procedure has lost at most half of its children.

More generally, for cleaning the relationship between $V_i$ (generation $i$) and $V_j$ (generation $j$) with $i<j$ and $j-i\ge 2$,
we colour a vertex $u$ in $V_j$ black if its ancestor in $V_i$ is adjacent to $u$ and we colour it white otherwise. Then we propagate 
this colouring for all generations smaller than $j$ (including the root) using the majority rule, and then delete the vertices (and all their
descendants) of the colour which is not assigned to the root. Again, each of the vertices that survived this procedure has lost at most half of
its children. Therefore, after applying the cleaning procedure to each pair of non-consecutive generations, we a left with a tree in which each
non-leaf vertex has at least $M/2^{\binom K 2} \geq K$ children, providing us with a desired linked-generation-descendant-only $K$-tree. 
\end{proof}

\subsection{Proof of Theorem~\ref{thm:2}}
\label{sec:3}

A linked-generation-descendant-only $K$-tree produced in the previous section can be u\-nique\-ly described by an auxiliary graph,
which we call {\it generation graph}, with vertices $\{0,1,\ldots, K\}$ and edges
connecting two generations  $i$ and $j$ if and only if generation $i$ vertices are complete to their descendants 
in generation $j$ (generation $i$ is ``linked'' to generation $j$).
Observe that this graph contains a (not necessarily induced) path of length $K$ joining $i$ to $i+1$ for all $i = 0,1, 2,\ldots, K-1$.

If $K$ is sufficiently large, then, assuming that the $K$-tree is free of large cliques and bicliques, the generation graph contains 
a long induced (chordless) path corresponding to an increasing sequence of generations. This can be proved directly by showing that 
every generation $i$ in the $K$-tree is linked to a bounded number of generations $j$ with $j<i$, since otherwise a large biclique arises. 
We, instead, quote the following result from \cite{Ramsey}. In this result, a graph is called {\it traceable} if it contains a Hamiltonian path, i.e. 
a path $P$ containing all vertices of the graph, and a sub-path of $P$ is called {\it increasing} if its vertices respect the order in which they appear in $P$.  

\begin{theorem}{\rm \cite{Ramsey}}\label{thm:Ramsey}
For any positive integer $n$, there is a positive
integer $K=K(n)$ such that every traceable graph with  $K$
vertices contains an induced $K_{n}$, an induced $K_{n,n}$ or an induced increasing path with $n$ vertices.
\end{theorem}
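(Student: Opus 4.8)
The plan is to fix a Hamiltonian path $v_1 v_2 \cdots v_K$ of the traceable graph and to work throughout with the linear order it induces, so that an induced increasing path is simply an induced path whose vertices occur in increasing order of index. The engine of the proof is a single greedy construction. For each $i<K$ let $r(i)$ be the largest index $j$ for which $v_i$ is adjacent to $v_j$; since $v_i\sim v_{i+1}$ this is well defined and $r(i)\ge i+1$. Starting from any index $s$, form the sequence $x_0=s$ and $x_{t+1}=r(x_t)$; it is strictly increasing and terminates at $x_L=K$. I claim this sequence always spells out an \emph{induced} increasing path. Consecutive vertices are adjacent because $x_{t+1}=r(x_t)$ is by definition a neighbour of $v_{x_t}$; and for $t'\ge t+2$ we have $x_{t'}>x_{t+1}=r(x_t)$, so $v_{x_{t'}}$ lies strictly beyond the rightmost neighbour of $v_{x_t}$ and is therefore non-adjacent to it. Hence, first I would dispose of the easy regime: if for some starting index this greedy jump-path has at least $n$ terms, then $G$ already contains an induced increasing path with $n$ vertices and we are done.

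Second, I would treat the dense regime in which every greedy sequence is short. Let $\ell(i)$ denote the number of steps from $i$ to $K$. The map $i\mapsto r(i)$ is a function into $\{i+1,\ldots,K\}$, so it organises $\{1,\ldots,K\}$ into a rooted tree with root $K$ and arcs $i\to r(i)$, in which $\ell(i)$ is exactly the depth of $i$. If every $\ell(i)<n$, then this tree has $K$ nodes and depth at most $n-1$, so some node $z$ has a large set $C$ of children (at least $(K/n)^{1/(n-1)}$ of them), i.e.\ $r(c)=z$ for all $c\in C$. Every $c\in C$ then satisfies $c<z$, $v_c\sim v_z$, and $v_c$ anti-complete to all indices exceeding $z$. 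Applying Ramsey's theorem inside $C$ either yields an induced $K_n$ (and we are done) or a large independent subset, that is, a large induced star centred at $z$ whose leaves are pairwise non-adjacent and confined to the prefix $\{1,\ldots,z\}$. From here the plan is to extract a biclique by feeding several such structures into the disjoint-sets criterion of \cite{tree-width} already used in Lemma~\ref{lem:2}: locating a bounded family of centres with overlapping fans produces many pairwise-connected small vertex sets, hence a $K_{t,t}$-subgraph, which a final round of Ramsey inside the two sides cleans up to an induced $K_{n,n}$ or an induced $K_n$.

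The hard part will be precisely this last extraction. The greedy reach function controls the increasing path perfectly, but it carries almost no information about bicliques, since $r(i)$ records only the position of the farthest neighbour of $v_i$ and says nothing about which intermediate vertices are adjacent to it. Consequently the shallow-tree pigeonhole naturally produces a star rather than a biclique, and promoting a star to a genuine induced $K_{n,n}$ requires a separate, more delicate argument that simultaneously keeps both sides independent and their bipartite connection complete. I expect the cleanest route is to calibrate a threshold so that either the bipartite graph between the fan $C$ and the prefix $\{1,\ldots,z-1\}$ is dense enough to force a $K_{n,n}$-subgraph (by a K\H{o}v\'ari--S\'os--Tur\'an estimate), or it is sparse, in which case the bounded interaction can be leveraged---by reapplying the reach construction within the prefix---to recover a long induced increasing path after all, contradicting the assumption that every greedy sequence is short. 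Balancing these two regimes, and choosing the various Ramsey and pigeonhole parameters so that a single value $K=K(n)$ suffices, is the technical heart of the argument.
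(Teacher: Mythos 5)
Note first that the paper offers no proof of this statement: it is imported verbatim from Galvin, Rival and Sands \cite{Ramsey}, so the only comparison point is whether your attempt would actually establish it. The first half of your attempt is correct and genuinely useful. With $r(i)$ the rightmost neighbour of $v_i$ along the Hamiltonian order, the jump sequence $x_{t+1}=r(x_t)$ is indeed an induced increasing path, for exactly the reason you give: for $t'\ge t+2$ one has $x_{t'}\ge x_{t+2}>r(x_t)$, so $v_{x_{t'}}$ lies beyond every neighbour of $v_{x_t}$. The shallow-tree pigeonhole is also sound: if all jump sequences have fewer than $n$ terms, the functional graph of $r$ is a rooted tree of depth at most $n-1$ on $K$ nodes, so some $z$ has a fan $C$ of roughly $K^{1/(n-1)}$ children, each $c\in C$ satisfying $c<z$, $v_c\sim v_z$, and $v_c$ anti-complete to $\{z+1,\ldots,K\}$; Ramsey inside $C$ then yields an induced $K_n$ or a large induced star.

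The gap is everything after that, and it is the actual content of the theorem. A star is not one of the three outcomes, and your route from a star to an induced $K_{n,n}$ is a plan, not an argument, as you acknowledge. In particular, the proposed dichotomy does not close on the sparse side: your hypothesis that every greedy sequence is short was made for \emph{every} starting index of $G$, and it is not contradicted by anything happening in the prefix $\{1,\ldots,z-1\}$. The prefix is again traceable, but its reach function (rightmost neighbour \emph{within} the prefix) is a different function from $r$, and sparseness of the bipartite graph between $C$ and the prefix gives no lower bound on the length of its jump sequences; so the sparse branch produces no long increasing path and no contradiction --- at best another traceable graph in which the same analysis again terminates in a star. To force the ``bounded family of centres with overlapping fans'' needed for the Lemma~\ref{lem:2}-style extraction of a $K_{t,t}$ subgraph, you would need a decreasing parameter or an accumulation mechanism across recursion levels, and none is supplied: one centre with an independent, left-confined fan is an induced $K_{1,m}$, and promoting that to $n$ vertices with $n$ common neighbours is precisely what Galvin--Rival--Sands prove and what your proposal leaves open.
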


\medskip
We are now ready to prove Theorem~\ref{thm:2}.

\begin{proof}
Let $n$ be an arbitrary positive integer. Let $K = K(n)$ be the number from Theorem~\ref{thm:Ramsey}, 
let $M = M(K)$ be the number from Theorem~\ref{thm:4} and let $N=N(M)$ be the number from Theorem~\ref{thm:3}.
Consider an $N$-tree. By Theorem~\ref{thm:3} we know that either this $N$-tree contains an induced biclique 
$K_{M,M}$ or a  clique $K_M$, in which case we are done, or we have a descendant-only $M$-tree. 
Now, by Theorem~\ref{thm:4}, the descendant-only $M$-tree
contains a linked-generation-descendant-only $K$-tree $T$, and it can be uniquely represented
by a $K$-vertex generation graph $G$, which tells us about the relationships between different
generations. This graph, and hence the $K$-tree $T$, has a path of length $K$ (such a path in $T$ can be composed of the edges of the tree connecting  
a vertex in generation $K$ to the root). 
Therefore, by Theorem~\ref{thm:Ramsey}, 
both $G$ and $T$ contain either a clique $K_n$ or an induced biclique $K_{n,n}$ or an induced increasing path with $n$ vertices. 
In case of a clique or a biclique in $T$, we are done, while  an induced increasing path with $n$ vertices
gives rise to an induced $n$-tree. This finishes the proof.
\end{proof}

%%%%%%%%%%%%%%%%%%%%%%%%%%%%%%%%%%%%%%%%%%%%

\section{Conclusion}
\label{sec:con}

%%%%%%%%%%%%%%%%%%%%%%%%%%%%%%%%%%%%%%%%%%%%
To bound the degeneracy in a hereditary class, we obviously must exclude (forbid) a clique and a biclique.
Also, we must exclude a graph from  the class of $(C_3, C_{4}, \ldots, C_k)$-free graphs for each value of 
$k$, since the degeneracy is unbounded in these classes. The only way to do this by means of finitely many forbidden induced subgraphs 
is forbidding a graph from the intersection of all these classes,
i.e. forbidding a forest. In the present paper we have shown that forbidding a forest together with a clique and a biclique is sufficient for bounding degeneracy.
If we do not forbid a forest, then for bounding degeneracy we need to exclude infinitely many graphs. Only partial results are available for such classes.
In particular, degeneracy is bounded for $(C_k, C_{k+1}, \ldots)$-free graphs excluding a clique and a biclique, which follows from the results in \cite{KO}
(a specific bound on degeneracy was also obtained in \cite{deg}).
However, forbidding large cycles is not the only way to destroy classes of $(C_3, C_{4}, \ldots, C_k)$-free graphs for all values of $k$.
Identifying all sufficient conditions for bounding degeneracy in a hereditary class remains a challenging open problem.

\end{document}